\tikzset{%
scalearrow/.style n args={3}{
  decoration={
    markings,
    mark=at position (1-#1)/2*\pgfdecoratedpathlength
      with {\coordinate (#2);},
    mark=at position (1+#1)/2*\pgfdecoratedpathlength
      with {\coordinate (#3);},
    },
  postaction=decorate,
  }
}
\newcommand{\C}{\mathcal{C}}
\newcommand{\Rel}{\mathbf{Rel}}
\newcommand{\Pos}{\mathbf{Pos}}
\newcommand{\Cat}{\mathbf{Cat}}
\newcommand{\Endo}{\mathbf{Endo}}
\newcommand{\Mnd}{\mathbf{Mnd}}
\newcommand{\id}{\mathrm{id}}
\newcommand{\Span}{\mathrm{Span}}
\newcommand{\dg}{\delta}
\newcommand{\RelMon}{\mathbf{RelMon}}
\newcommand{\Set}{\mathbf{Set}}
\newcommand{\up}{\nearrow}
\renewcommand{\*}{\xmapsto{\,*\,}}
\newcommand{\isleftadjoint}{\dashv}
\newtheorem{fact}{Fact}
\begin{document}
\title{On monoids in the category of sets and relations}
\author{Anna Jen\v cov\'a 
\and Gejza Jen\v ca
}
\institute{
A. Jenčová \at
Mathematical Institute, Slovak Academy of Sciences,
Slovak Republic\\
		\email{jencova@mat.savba.sk}
\and
G. Jenča \at
Department of Mathematics and Descriptive Geometry\\
Faculty of Civil Engineering,
Slovak University of Technology,
	Slovak Republic\\
              \email{gejza.jenca@stuba.sk}
}
\maketitle 
\begin{abstract}
The category $\mathbf{Rel}$ is the category of sets (objects) and relations (morphisms). Equipped
with the direct product of sets, $\mathbf{Rel}$ is a monoidal category. Moreover,
$\mathbf{Rel}$ is a locally posetal 2-category, since every homset $\mathbf{Rel}(A,B)$ is a poset with
respect to inclusion. We examine the 2-category of monoids $\mathbf{RelMon}$
in this category. The morphism we use are lax.

This category includes, as subcategories, various interesting classes:
hypergroups, partial monoids (which include various types of quantum
logics, for example effect algebras) and small categories. We show how
the 2-categorical structure gives rise to several previously defined
notions in these categories, for example certain types of congruence relations 
on generalized effect
algebras. This explains where these definitions come from. 
\subclass{Primary: 03G12, Secondary: 18D05} 
\keywords{effect algebra, relational monoid, 2-category} 
\end{abstract}
\begin{acknowledgements}
This research is supported by grants VEGA 2/0069/16, 1/0420/15,
Slovakia and by the Slovak Research and Development Agency under the contract
APVV-14-0013.
\end{acknowledgements}

\section{Introduction}

A {\em strict 2-category} \cite{ehresmann1963categories,benabou1965categories}
is a category with ``morphisms between morphisms'' or, in other words,
a category where the set of all homomorphism between two objects carries
the structure of a category. The most important example is $\Cat$ -- the category of small
categories.

The most straightforward definition of a strict 2-category is relatively
simple: it is a category enriched \cite{kelly1982basic} in the (cartesian closed) category $\Cat$.
Unfortunately, this definition is not general enough to cover many interesting
cases, because it may happen that the composition of 1-cells is associative
only up to a 2-isomorphism (for example, the category of spans over $\Set$ is
not strict), so one has to weaken the axioms, obtaining a notion of a {\em weak
2-category}, sometimes called a {\em bicategory}.
We refer the reader to
\cite{lack20102categories,kelly1974review,leinster2004higher} for an introduction to the subject
of 2-categories and to \cite{mac1998categories,awodey2006category} for category-theoretical
terminology.

The starting point of the {\em formal category theory} is the
observation that one can formulate various categorical notions (for example,
monads, adjunctions, Kan extensions) in the language of 2-categories. Changing the
underlying 2-category from $\Cat$ to some other 2-category $\C$, it may then 
happen that these notions give rise to either well-known or new notions, perhaps allowing
for a new insight. Let us illustrate this phenomenon by an example:

\begin{example}
For every category $\C$, there is a {\em bicategory of spans $\Span(\C)$}
(see \cite[Chapter XII, Section 7]{mac1998categories}). If $\C$ is $\Set$ then the monads in $\Span(\C)$ are
small categories. If $\C$ is the category of groups then the monads in $\Span(\C)$ are the
{\em twisted modules} (see \cite[Chapter XII, Section 8]{mac1998categories}).
\end{example}

The aim of this paper is to examine the notions ``adjunction'' and ``monad'' in
the 2-category of monoids in the monoidal 2-category of sets and relations,
equipped with the direct product of sets $(\Rel,\times,1)$. We denote
this 2-category of monoids by $\RelMon$. 
This category includes partial monoids (which include effect algebras), as well as
small categories (considered as sets of arrows equipped with the composition).

Realistically, one probably cannot hope to achieve some sort of 
``real result'' from these considerations. However, we find it interesting
and surprising that some notions and conditions used in quantum logics
appear to come from monads and adjunctions in $\RelMon$. Moreover, there
are other concrete manifestations of these abstract notions in other parts of mathematics,
as demonstrated by several examples.

Recently, there were several other papers published in the area of categorical
quantum mechanics \cite{abramsky2009categorical} that concern $\Rel$ and $\RelMon$. In \cite{heunen2013relative} and
\cite{heunen2015categories}, authors establish an interesting 
equivalence between {\em special dagger Frobenius structures}
in the dagger monoidal category $\Rel(\mathcal C)$ and internal groupoids in $\mathcal C$,
for a regular category $\mathcal C$. In \cite{contreras2015groupoids}, 
the results from \cite{heunen2013relative} are extended to describe a correspondence between
certain types of generalized groupoids and associative structures in $\Rel$, establishing a link 
between these abstract results and Poisson sigma models.
In \cite{heunen2016monads}, monads on dagger categories are investigated.
In \cite{pavlovic2017modular}, effect algebras are characterized as certain monoids
in $\Rel$, using merely the dagger-compact structure of $\Rel$.

\section{The 2-category of sets and relations}

In this section, we review some elementary facts concerning the 2-category of sets
and relations. Everything in this section is well-known, see \cite{benabou1967introduction}.

The {\em category of sets and relations}, denoted by $\Rel$, is a category 
whose objects (or $0$-cells) are sets and arrows (or $1$-cells) are relations 
$f\subseteq A\times B$. The composite of arrows $f\colon A\to B$ and 
$g\colon B\to C$ is the arrow $(g\circ f)\colon A\to C$ given by the rule
$$
(a,c)\in (g\circ f)\Leftrightarrow (\exists b\in B)(a,b)\in f\text{ and }(b,c)\in g.
$$
The identity arrow $\id_A\colon A\to A$ is the identity relation $\id_A=\{(a,a)\colon a\in A\}$.

Note that there is an obvious faithful functor 
$U\colon\Set\to\Rel$ that is identity on objects and takes a mapping $f\colon A\to B$ to
its graph
$$
\{(x,y)\in A\times B\colon f(x)=y\}.
$$
This forgetful functor is a left adjoint, the corresponding right adjoint is the powerset/image functor
$P\colon \Rel\to\Set$. This adjunction induces the well-known {\em covariant powerset monad} on
$\Set$.  $\Rel$ is then isomorphic to the Kleisli category for this monad.

Moreover, the category of sets and relations is a 2-category: if $h_1,h_2$ are relations $A\to B$, then a
2-cell $h_1\to h_2$ is simply the fact that $h_1\subseteq h_2$. Thus, every hom-category
in $\Rel$ is a poset.

As usual, we draw a 2-cell in a commutative diagram as a double arrow, for example 
\begin{equation}
\label{cd:example}
\begin{tikzcd}
A
	\ar[r,"f_2"]
	\ar[d,"f_1"']
&
B
	\ar[d,"g_2"]
\\
C
	\ar[r,"g_1"']
	\ar[ru,phantom,scalearrow={0.333}{start}{end}]
	\ar[ru,Rightarrow,to path = (start)--(end)]
&
D
\end{tikzcd}
\end{equation}

means that $g_1\circ f_1\subseteq g_2\circ f_2$. Note that on the level of
elements, the diagram (\ref{cd:example}) means that
\begin{itemize}
\item for every $a\in A$ and $d\in D$ such that there is a $c\in C$ with $(a,c)\in f_1$ 
and $(c,d)\in g_1$,
\item there exists $b\in B$ such that $(a,b)\in f_2$ and $(b,d)\in g_2$.
\end{itemize}

Besides the structure of a 2-category, $\Rel$ carries the structure of a 
{\em dagger category}: there is an involution functor $\dag\colon \Rel\to\Rel^{op}$ that is identity on objects.
For a relation $f\subseteq A\times B$, $f^{\dag}\subseteq B\times A$ is the relation
given by the equivalence 
$$
(b,a)\in f^{\dag}\Leftrightarrow (a,b)\in f.
$$

If $A,B\in \Rel$, then the disjoint union of sets $A \sqcup B$ is both the product and the 
coproduct of $A,B$ in $\Rel$. Since $\Rel$ lacks some (co)equalizers,  it is not a (co)complete category.

Considering $\Rel$ as a 2-category, we may look at various category-theoretic notions
in $\Rel$.

Recall \cite{lack20102categories}, that in a 2-category a 1-cell $f\colon A\to B$ is left adjoint to a 1-cell $g\colon B\to A$ if and only if there
are 2-cells $\eta\colon \id_A\to g\circ f$ and $\varepsilon\colon f\circ g\to\id_B$ such that in the hom-categories
$[A,B]$ and $[B,A]$ the diagrams
$$
\begin{tikzcd}
f
	\ar[r,"f\eta"]
	\ar[rd,"\id_f"']
&
fgf
	\ar[d,"\varepsilon g"]
\\
~
&
f
\end{tikzcd}
\qquad
\begin{tikzcd}
g
	\ar[r,"\eta f"]
	\ar[rd,"\id_g"']
&
gfg
	\ar[d,"g\varepsilon"]
\\
~
&
g
\end{tikzcd}
$$
commute.

However, since every hom-category in $\Rel$ is a poset, these conditions are automatically valid
whenever there exist 2-cells $\id_A\subseteq gf$ and $fg\subseteq\id_B$. A straightforward
reasoning gives us the following fact.

\begin{fact} 
An arrow $f\colon A\to B$ in $\Rel$ is a left adjoint to an arrow $g\colon B\to A$ 
if and only if $f$ is (a graph of) a mapping $A\to B$ and $g=f^{\dag}$.
\end{fact}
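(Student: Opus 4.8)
The plan is to translate the two inequalities $\id_A\subseteq g\circ f$ and $f\circ g\subseteq\id_B$ — which, by the remark preceding the statement, are all that is needed for $f\isleftadjoint g$ in a locally posetal 2-category — into elementwise statements using the composition rule in $\Rel$, and then to read off directly that $f$ is a total single-valued relation and that $g$ is its converse.

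First I would treat the ``only if'' direction. Assume $f\isleftadjoint g$. Unwinding $\id_A\subseteq g\circ f$ gives: for every $a\in A$ there is some $b_0\in B$ with $(a,b_0)\in f$ and $(b_0,a)\in g$; in particular $f$ is total. Now fix such a witness $b_0$ for a given $a$ and suppose $(a,b')\in f$ for an arbitrary $b'$. Then $(b_0,a)\in g$ and $(a,b')\in f$ yield $(b_0,b')\in f\circ g$, so the inequality $f\circ g\subseteq\id_B$ forces $b'=b_0$. Hence $f$ is single-valued, and being also total it is the graph of a mapping $\varphi\colon A\to B$ with $b_0=\varphi(a)$. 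Finally I would check $g=f^{\dag}$ by two inclusions. If $(a,b)\in f$ then $b=\varphi(a)$, and the unit witness for $a$ gives $(\varphi(a),a)=(b,a)\in g$; thus $f^{\dag}\subseteq g$. Conversely, if $(b,a)\in g$, then combining this with $(a,\varphi(a))\in f$ gives $(b,\varphi(a))\in f\circ g$, so $b=\varphi(a)$ by the counit inequality, i.e. $(a,b)\in f$ and $(b,a)\in f^{\dag}$; thus $g\subseteq f^{\dag}$.

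For the ``if'' direction, assume $f$ is the graph of a mapping $\varphi\colon A\to B$ and $g=f^{\dag}$. Then $g\circ f=f^{\dag}\circ f$ contains every pair $(a,a)$ via the intermediate element $\varphi(a)$, so $\id_A\subseteq g\circ f$; and if $(b,b')\in f\circ f^{\dag}$ via some $a\in A$, then $b=\varphi(a)=b'$, so $f\circ f^{\dag}\subseteq\id_B$. By the remark preceding the statement, these two inequalities already witness that $f$ is left adjoint to $g$.

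There is no serious obstacle here; the content is a direct unwinding of the definition of composition in $\Rel$. The only step that requires a moment's care is the derivation that $f$ is single-valued, where one must substitute the unit witness $b_0$ into the counit inequality in the correct order (as the first coordinate, with $a$ as the intermediate element).
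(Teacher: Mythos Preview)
Your argument is correct and is exactly the elementwise unwinding that the paper leaves implicit under the phrase ``a straightforward reasoning gives us the following fact''; the paper does not spell out any details beyond observing that the triangle identities are automatic and that one only needs $\id_A\subseteq g\circ f$ and $f\circ g\subseteq\id_B$. Your derivation of single-valuedness and of the two inclusions $f^{\dag}\subseteq g$ and $g\subseteq f^{\dag}$ is clean and in the intended spirit.
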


Note that this implies that the canonical inclusion
$\Set\to\Rel$ embeds $\Set$ into $\Rel$ as the subcategory of left-adjoints in
the 2-category $\Rel$.

Recall \cite{street1972formal}, that a {\em monad} in a 2-category is an object $A$ equipped
with a triple $(s,\eta,\mu)$, where $s\colon A\to A$, $\eta\colon \id_A\to s$ and $\mu\colon s\circ s\to s$
such that in the hom-category $[A,A]$ the equations
$\mu\circ s\eta=\mu\circ\eta s=\id_s$ and $\mu\circ s\mu=\mu\circ\mu s$ hold.

Similarly as for the notion of a left-adjoint, the fact that $\Rel$ is enriched
in $\Pos$ implies that these equations for $\eta$ and $\mu$ are valid whenever
$\eta$ and $\mu$ exist. Thus, an $s\colon A\to A$ in $\Rel$ is an underlying 1-cell
of a monad if and only if $\id_A\subseteq s$ and $s\circ s\subseteq s$.
In other words,

\begin{fact}
Monads in $\Rel$ are preorders. 
\end{fact}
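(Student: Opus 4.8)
The plan is simply to unwind the two inclusions $\id_A\subseteq s$ and $s\circ s\subseteq s$ which, by the discussion immediately preceding the statement, characterise exactly when a 1-cell $s\colon A\to A$ in $\Rel$ is the underlying 1-cell of a monad (the coherence equations for $\eta$ and $\mu$ being automatic, since every hom-category of $\Rel$ is a poset). So let $s\subseteq A\times A$ be a relation. First I would observe that $\id_A\subseteq s$ says precisely that $(a,a)\in s$ for every $a\in A$, i.e.\ that $s$ is reflexive.

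Next I would translate the inclusion $s\circ s\subseteq s$. By the definition of composition in $\Rel$, $(a,c)\in s\circ s$ iff there is some $b\in A$ with $(a,b)\in s$ and $(b,c)\in s$; hence $s\circ s\subseteq s$ holds iff for all $a,b,c\in A$, $(a,b)\in s$ and $(b,c)\in s$ imply $(a,c)\in s$ — that is, $s$ is transitive. Combining the two observations, a relation $s$ on $A$ is the underlying 1-cell of a monad in $\Rel$ exactly when it is reflexive and transitive, i.e.\ a preorder on $A$.

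For the converse I would note that any preorder $s$ on $A$ satisfies $\id_A\subseteq s$ and $s\circ s\subseteq s$ (in fact $s\circ s=s$, since monotonicity of composition together with $\id_A\subseteq s$ yields $s=s\circ\id_A\subseteq s\circ s$), and that, because each hom-category of $\Rel$ is a poset, the unit $\eta\colon\id_A\to s$ and the multiplication $\mu\colon s\circ s\to s$ then exist, are unique, and the monad axioms hold for free; so $(s,\eta,\mu)$ is genuinely a monad. Thus the assignment "monad on $A$" $\leftrightarrow$ "preorder on $A$" is a bijection.

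Every step here is a direct elementwise unravelling, so I do not expect any real obstacle; the only point that needs a moment's care is keeping the direction of relational composition straight when reading off transitivity (and recalling that the monad coherence conditions come for free in a locally posetal 2-category, which the text has already recorded).
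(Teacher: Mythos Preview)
Your argument is correct and is exactly the paper's own: the text surrounding the Fact already reduces the monad conditions to $\id_A\subseteq s$ and $s\circ s\subseteq s$, and then observes that these are precisely reflexivity and transitivity. Your write-up just spells out the elementwise unravelling and the (trivial) converse a bit more fully than the paper does.
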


Indeed, observe that $\id_A\subseteq s$ means that $s$ is a reflexive and $s\circ s\subseteq s$ means
that $s$ is transitive.

In a 2-category, if $f$ is left adjoint to $g$ (in symbols $f\isleftadjoint g$), then the quadruple
$(f,g,\eta,\varepsilon)$ gives rise to a monad $(gf,\eta,g\varepsilon f)$ on the domain of $f$.

In the 2-category $\Cat$, every monad arises from an adjunction. This is not true in $\Rel$.

\begin{fact}
	A monad $s\colon A\to A$ in $\Rel$ arises from an adjunction if and
only if $s$ is an equivalence relation.  
\end{fact}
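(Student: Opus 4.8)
My plan is to reduce the statement, via the Fact describing adjunctions in $\Rel$, to an elementary computation. Recall that in any 2-category an adjunction $f\isleftadjoint g$ induces on the domain of $f$ a monad whose underlying 1-cell is $g\circ f$. Since $\Rel$ is $\Pos$-enriched, two monads in $\Rel$ with the same underlying 1-cell coincide (their units and multiplications are forced inclusions, as noted before Fact~1 and in the discussion of monads), so ``$s$ arises from an adjunction'' is equivalent to the existence of a set $B$ and a 1-cell $f\colon A\to B$ with $f\isleftadjoint f^{\dag}$ and $f^{\dag}\circ f=s$. By the Fact on adjunctions in $\Rel$, such an $f$ is exactly the graph of a function $A\to B$. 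Thus the whole claim comes down to: the relations of the form $f^{\dag}\circ f$, for $f$ a function, are precisely the equivalence relations.

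For the ``only if'' direction I would simply unwind the definitions of relational composition and of the dagger: for a function $f\colon A\to B$, we have $(a,a')\in f^{\dag}\circ f$ iff there is $b\in B$ with $f(a)=b$ and $f(a')=b$, i.e.\ iff $f(a)=f(a')$. Hence $f^{\dag}\circ f$ is the kernel relation $\ker f=\{(a,a')\colon f(a)=f(a')\}$, which is visibly reflexive, symmetric and transitive. So any monad arising from an adjunction in $\Rel$ is an equivalence relation.

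For the converse, given an equivalence relation $s$ on $A$, I would take $B:=A/s$ and let $f:=q$ be the canonical surjection $q\colon A\to A/s$, viewed as a 1-cell of $\Rel$ through its graph. Being (the graph of) a function, $q\isleftadjoint q^{\dag}$, and by the computation above $q^{\dag}\circ q=\ker q=s$; therefore the adjunction $q\isleftadjoint q^{\dag}$ induces a monad with underlying 1-cell $s$, which — the monad structure on a preorder in $\Rel$ being unique — is the monad $s$ itself. The argument is routine once the Fact on adjunctions is available; the only point I would take care to spell out is precisely this last remark, namely that matching underlying 1-cells suffices and we need not separately verify that the induced unit and multiplication agree with those of $s$. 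I do not expect any genuine obstacle beyond that bookkeeping.
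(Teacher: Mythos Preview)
Your proposal is correct and follows essentially the same route as the paper: use the characterization of adjunctions in $\Rel$ to identify the induced monad with the kernel relation $f^{\dag}\circ f$ of a map $f$, and for the converse take the quotient map $q\colon A\to A/s$. The only difference is that you make explicit the (harmless) bookkeeping point that in a $\Pos$-enriched setting the monad structure on a given 1-cell is unique, which the paper leaves implicit.
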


Indeed, if $f\colon A\to B$ is a mapping (that means, a left adjoint in
$\Rel$), then the monad associated with the corresponding adjunction is
$f^{\dag}\circ f\colon A\to A$.  This is the equivalence relation on $A$ given by the
decomposition of $A$ to the fibers of $f$, usually called {\em the kernel of
$f$}. On the other hand, if $\sim$ is an equivalence relation on $A$, then we have
an obvious adjunction between $A$ and the quotient $A/\sim$ that in turn
gives rise to $\sim$.

\section{Monoids in $\Rel$}

It is easy to see that the cartesian product $\times$ of sets is a bifunctor from
$\Rel\times\Rel$ to $\Rel$. 

As $\times$ is the product in $\Set$, it satisfies
the coherence conditions for a monoidal category \cite[Chapter VII]{mac1998categories}, 
so $(\Rel,\times,1)$ is a monoidal category.

\begin{definition}
Let $(\C,\otimes,1)$ be a monoidal category. A {\em monoid} in $\C$ is
a triple $(A,e,*)$, where $A$ is an object of $\C$, $e\colon 1\to A$ and
$*:A\otimes A\to A$ such that the following diagrams commute
$$
\begin{tikzcd}[column sep=4em,row sep=3em]
A\otimes 1
	\ar[r,"\id\otimes e"]
	\ar[rd,"\rho"']
&
A\otimes A
	\ar[d,"*"]
&
1\otimes A
	\ar[l,"e\otimes\id"']
	\ar[ld,"\lambda"]
\\
~
&
A
\end{tikzcd}
\qquad
\begin{tikzcd}
A\otimes(A\otimes A)
	\ar[rr,"\alpha"]
	\ar[d,"1\otimes *"']
&
~
&
(A\otimes A)\otimes A
	\ar[d,"*\otimes\id"]
\\
A\otimes A
	\ar[rd,"*"']
&
~
&
A\otimes A
	\ar[ld,"*"]
\\
~
&
A
&
~
\end{tikzcd}
$$
Here, $\lambda,\rho$ and $\alpha$ denote the (left and right) unitors and the associator
of the monoidal category $\C$.
\end{definition}

The triangle diagrams are called the {\em right (left) unit axioms}. The pentagon diagram
is called the {\em associativity axiom}.

The monoids in the category $(\Rel,\times,1)$ are called {\em relational monoids}.

Let us spell out the axioms of a relational monoid in detail. Let $(A,e,*)$ be
a relational monoid. Since $e\colon 1\to A$ is a relation, we may identify $e$ with a
subset 
$$
E_A=\{y\in A\colon (1,y)\in e\}
$$
of $A$, which we call {\em the set of units of $A$}.

The $*$ is a relation from $A\times A$ to $A$, so it is a 
subset of $(A\times A)\times A$.
We shall write $(a_1,a_2)\*a$ to denote the fact that 
$((a_1,a_2),a)\in *\subseteq (A\times A)\times A$.

The right unit axiom means that, for every $a\in A$, there is
$y\in E_A$ such that $(a,y)\* a$ and, at the same time,
whenever there is a $y\in E_A$ such that $(a,y)\* b$, then $a=b$. The meaning of the left unit axiom is similar.

Associativity axiom means that for every quadruple $a_1,a_2,a_3,z$ of elements of $A$, the
following statements are equivalent:
\begin{itemize}
\item there exists $w\in A$ such that $(a_1,a_2)\* w$ and $(w,a_3)\* z$;
\item there exists $w'\in A$ such that $(a_2,a_3)\* w'$ and $(a_1,w')\* z$.
\end{itemize}

We know that every ordinary monoid $A$ in $\Set$ has exactly one unit. 
In general, this is not true for relational monoids.
\begin{proposition}
Let $A$ be a relational monoid. 
For every $a\in A$, there is exactly one $y\in E_A$ (called 
the right unit of $a$) such that $(a,y)\* a$.
\end{proposition}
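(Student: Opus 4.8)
The existence half of the claim is immediate: it is exactly the existential part of the right unit axiom as spelled out above. So the real content is uniqueness, and the plan is to fix $a\in A$, assume $y,y'\in E_A$ with $(a,y)\* a$ and $(a,y')\* a$, and derive $y=y'$.

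The one non-formal ingredient will be the associativity axiom, applied to the triple $(a_1,a_2,a_3)=(a,y,y')$. For this triple the first of the two equivalent conditions holds with $z=a$: take $w=a$, using $(a,y)\* a$ for the first product and $(a,y')\* a$ for the second. Hence the second condition holds as well, producing some $w'\in A$ with $(y,y')\* w'$ and $(a,w')\* a$. Only the relation $(y,y')\* w'$ will be used from here on.

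Next I would invoke the two unit axioms in turn. Since $y\in E_A$, the uniqueness part of the left unit axiom, applied to the element $y'$, says that any $b$ with $(y,y')\* b$ must equal $y'$; taking $b=w'$ gives $w'=y'$, so in fact $(y,y')\* y'$. Now, since $y'\in E_A$, the uniqueness part of the right unit axiom, applied to the element $y$, says that any $b$ with $(y,y')\* b$ must equal $y$; taking $b=y'$ gives $y=y'$, as desired.

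I do not expect a genuine obstacle here; the only thing to be careful about is the bookkeeping — choosing the instance $(a,y,y')$ of associativity rather than some other one, and keeping track of which of $y$ and $y'$ is the unit being cancelled and whether the left or the right unit axiom applies at each step. Everything else is a direct translation of the element-wise formulations of the axioms recorded before the statement.
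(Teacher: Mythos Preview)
Your proof is correct and follows essentially the same route as the paper's: apply associativity to the triple $(a,y,y')$ to obtain some element $w'$ (the paper calls it $z$) with $(y,y')\* w'$, and then use the left and right unit axioms on this single relation to force $w'=y'$ and $w'=y$. Your presentation is slightly more sequential (first deducing $w'=y'$, then reusing the relation), whereas the paper reads off both equalities directly from $(y,y')\* z$, but this is a cosmetic difference only.
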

\begin{proof}
By previous remarks, there exists $y\in E_A$ such that $(a,y)\* a$. 
Let us prove that this $y$ is unique.

Let $y'$ be another right
unit of $a$. We see that 
$$
((a,y),y')\xmapsto{*\times\id_A}(a,y')\* a.
$$
By the associativity axiom, there is some $z\in A$ such that
$$
(a,(y,y'))\xmapsto{\id_A\times *}(a,z)\* a
$$
So, in particular, $(y,y')\* z$ and $y'\in E_A$. Therefore, by the right unit axiom,
$y=z$. Similarly, by the left unit axiom, $y'=z$ and this implies $y=y'$.
\end{proof}
By a symmetrical argument, there is exactly one left unit for every element of $A$.

Let us consider some examples of relational monoids.
\begin{example}
Every ordinary monoid in $\Set$ is a relational monoid.
\end{example}
\begin{example}
Every hypergroup \cite{wall1937hypergroups} is a relational monoid.
\end{example}
\begin{example}
\label{ex:catismonoid}
Every small category is a relational monoid: the underlying set of the
relational monoid corresponding to a category $C$ is the set all arrows
in $C$. Multiplication is the composition of arrows and the set of units is the
set of all identity arrows of $C$.
This observation goes back to the seminal paper
\cite{barr1970relational}, see also \cite{kenney2011categories,heunen2013relative}
for more results on the connections between $\RelMon$ and $\Cat$.
\end{example}
\begin{example}
\label{ex:posetismonoid}
As a consequence of the previous example, the set of all comparable
pairs in a poset is a relational monoid.

Explicitly, let $(A,\leq)$ be a poset, write $Q(A)$ for the set of all comparable pairs of elements of $A$.
In lattice theory, the elements of $Q(A)$ are called {\em quotients}. 
As usual (see for example \cite{Gra:GLT}) we write $b/a\in Q(A)$ to express the facts that 
that $a,b\in A$ and $a\leq b$.

Let us equip $Q(A)$ with the relation $*\colon Q(A)\times Q(A)\to Q(A)$ given by the
rule $(b/a,d/c)\* (d/a)$ if and only if $b=c$ and the relation $e\colon 1\to Q(A)$ that
selects the trivial quotients of the type $a/a$.

Then $(Q(A),*,e)$ is a relational monoid.
\end{example}
\begin{example}
Let $\mathbb R_0^+$ be the set of all nonnegative real numbers, 
let $*:\mathbb R_0^+\times\mathbb R_0^+\to\mathbb R_0^+$ be a
relation given by the rule $(a,b)\* x$ if and only if $a\leq x\leq a+b$ and
let $e:1\to\mathbb R_0^+$ be a relation that picks out $0$ from $\mathbb R_0^+$. Then
$(\mathbb R_0^+,*,e)$ is a relational monoid. Note that $*$ is not a partial mapping.
\end{example}

For every monoidal category $(C,\otimes,1)$, the class of monoids in $C$
comes equipped with a standard notion of morphism between monoids,
giving rise to a category of monoids in $C$.
However, this notion does not work in
examples we are interested in. It turns out that another notion is
more appropriate for our purposes.

For relational monoids $A,B$ and a relation $h\colon A\to B$, we
say that $h$ is a {\em morphism of relational monoids} if and only if there
are 2-cells
$$
\begin{tikzcd}
A\times A
	\ar[r,"h\times h"]
	\ar[d,"*"']
&
B\times B
	\ar[d,"*"]
\\
A
	\ar[ru,phantom,scalearrow={0.333}{start}{end}]
	\ar[ru,Rightarrow,to path = (start)--(end)]
	\ar[r,"h"']
&
B
\end{tikzcd}
\qquad
\begin{tikzcd}
1
	\ar[r,"e"]
	\ar[rd,"e"'{name=U}]
&
A
	\ar[d,"h"]
	\ar[Leftarrow,from=U,shorten >= 2pt , shorten <= 5pt]
\\
~
&
B
\end{tikzcd}
$$

By a {\em category of relational monoids} we mean a 2-category in which
\begin{itemize}
\item 0-cells are relational monoids,
\item 1-cells are morphisms of relational monoids,
\item 2-cells are the inclusions of relations, inherited from $\Rel$.
\end{itemize}
The category of relational monoids is denoted by $\RelMon$. 

\begin{example}
The power set $P(\mathbb N^+)$ of the set of all positive natural numbers,
equipped with a elementwise multiplication, is a monoid with
a neutral element $\{1\}$. 
Let us define a relation $h\colon P(\mathbb N^+)\to\mathbb N$,
where $\mathbb N$ is the additive monoid of natural numbers,
by the rule $(X,n)\in h$ if and only if there is some $a\in X$ such that
the length of the prime decomposition of $a$ is equal to $n$.
Then $h$ is a morphism in $\RelMon$ from $P(\mathbb N^+)$ to $(\mathbb N,+,0)$
that is not a graph of mapping.
\end{example}

Since $\RelMon$ is a 2-category, we may consider adjunctions in $\RelMon$.
Let $A,B$ be relational monoids, let $f\colon A\to B$ and $g\colon B\to A$ be morphisms
in $\RelMon$. Then it is easy to check
that $f$ is left adjoint to $g$ if and only if $f$ is a mapping and
$g=f^\dag$.

From this, we obtain a characterization of left adjoints in $\RelMon$.
\begin{proposition}\label{prop:leftadjoint}
A morphism $f\colon A\to B$ of relational monoids is a left adjoint if and only if
$f$ is a mapping and the following conditions are satisfied.
\begin{enumerate}
\item[(L1)] For all $b_1,b_2\in B$ and $a\in A$ such that $(b_1,b_2)\* f(a)$
there exist $a_1,a_2\in A$ such that $f(a_1)=b_1$, $f(a_2)=b_2$ and $(a_1,a_2)\* a$.
\item[(L2)] If $x\in A$ and $f(x)\in E_B$, then $x\in E_A$.
\end{enumerate}
\end{proposition}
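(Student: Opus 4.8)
\section*{Proof proposal}

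The plan is to reduce the statement, via the description of adjoint pairs in $\RelMon$ recalled just above (namely: for morphisms $f$ and $g$ of relational monoids, $f\dashv g$ iff $f$ is a mapping and $g=f^{\dag}$), to the observation that (L1) and (L2) are precisely the two inclusions witnessing that $f^{\dag}$ is a morphism of relational monoids.

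First I would handle the direction ($\Rightarrow$). If the morphism $f$ is a left adjoint in $\RelMon$, then $f\dashv g$ for some 1-cell $g$ of $\RelMon$; by the recalled fact, $f$ is (the graph of) a mapping and $g=f^{\dag}$. Since $g$ is a 1-cell of $\RelMon$ it is a morphism of relational monoids, so the two 2-cells from the definition of a morphism exist for $h=f^{\dag}\colon B\to A$. I would then read these two inclusions off elementwise. Using that $f$ is a mapping, so that $(b,a)\in f^{\dag}$ iff $f(a)=b$, the multiplication 2-cell $f^{\dag}\circ * \subseteq *\circ(f^{\dag}\times f^{\dag})$ says: whenever $(b_1,b_2)\*f(a)$ there are $a_1,a_2\in A$ with $f(a_1)=b_1$, $f(a_2)=b_2$ and $(a_1,a_2)\*a$ --- this is exactly (L1). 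The unit 2-cell $f^{\dag}\circ e\subseteq e$ says: if $f(x)\in E_B$ then $x\in E_A$ --- this is exactly (L2).

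For the direction ($\Leftarrow$) I would run the same translation backwards. Assuming $f$ is a mapping and that (L1) and (L2) hold, reading the two equivalences above in reverse shows that the inclusions $f^{\dag}\circ * \subseteq *\circ(f^{\dag}\times f^{\dag})$ and $f^{\dag}\circ e\subseteq e$ both hold, i.e.\ $f^{\dag}$ is a morphism of relational monoids, hence a 1-cell of $\RelMon$. Now $f$ is, by the standing hypothesis, a morphism of relational monoids, and it is a mapping; so by the recalled fact $f\dashv f^{\dag}$ in $\RelMon$, and $f$ is a left adjoint. (Alternatively, without invoking that fact: the inclusions $\id_A\subseteq f^{\dag}\circ f$ and $f\circ f^{\dag}\subseteq\id_B$ hold in $\Rel$ because $f$ is a total, single-valued relation, hence are 2-cells of $\RelMon$; since $\RelMon$ is locally posetal the triangle identities hold automatically and exhibit $f\dashv f^{\dag}$.)

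The computation is routine; the only point that needs care is the bookkeeping in passing between the two inclusion-2-cells for $f^{\dag}$ and the elementwise conditions --- in particular, using totality and single-valuedness of $f$ to simplify $f^{\dag}\circ *$ --- together with keeping the directions of the inclusions straight. I do not expect any genuine obstacle beyond this.
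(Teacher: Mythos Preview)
Your proposal is correct and follows essentially the same approach as the paper: the paper's proof simply notes that $f$ is a left adjoint in $\RelMon$ iff it is a left adjoint in $\Rel$ (i.e., a mapping) and $f^\dag$ is a morphism in $\RelMon$, and then remarks that (L1) and (L2) are exactly the elementwise unpacking of the two 2-cells required for $f^\dag$ to be a morphism of relational monoids. Your write-up is just a more detailed version of this, carrying out the elementwise translation and the two implications explicitly.
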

\begin{proof}
Clearly, a morphism of relational monoids 
$f$ is left adjoint in $\RelMon$ if and only if $f$ is left adjoint
in $\Rel$ (that means, a mapping) and
$f^\dag$ is a morphism in $\RelMon$.
It remains to observe that the conditions (L1) and (L2) just spell out
that the right adjoint $f^\dag$ is a morphism of relational monoids.
\end{proof}

\begin{example}
Let $K$ be a field.  Let $K_m(X)$ be set of all monic polynomials over $K$
equipped with the multiplication of polynomials. Then $K_m(X)$ is an ordinary
monoid in $\Set$, hence it is a relational monoid.  Consider the mapping
$\dg\colon K_m(X)\to\mathbb N$ that takes every polynomial to its degree. Then $\dg$
is a morphism of monoids.  Moreover, $\dg$ is a left adjoint in $\RelMon$ if
and only if $K$ is algebraically closed.

Indeed, let $\dg$ be a left adjoint in $\RelMon$ and let $p$ be a monic polynomial of degree
greater than 1. Since we have $\dg(p)=1+(\dg(p)-1)$, property
(L1) of Proposition \ref{prop:leftadjoint} implies that there
are $p_1,p_2\in K_m(X)$ such that $\dg(p_1)=1$, $\dg(p_2)=\dg(p)-1$ and
$p=p_1.p_2$. So $p$ is divisible by a polynomial of degree $1$, hence $p$ has a
root.

Assume that $K$ is algebraically closed. Let us prove (L1), (L2) of Proposition
\ref{prop:leftadjoint}. Let $p\in K_m(X)$ and suppose that $\dg(p)=n_1+n_2$. To
prove (L1), we need to find monic polynomials such that $p=p_1.p_2$,
$\dg(p_1)=n_1$ and $\dg(p_2)=n_2$. This is easy, because $p$ is a product of
some polynomials of degree $1$. Moreover, $\dg(p)=0$ if and only if $p=1$ (this
is why we have to consider monic polynomials). So (L2) holds and hence $\dg$ is
left adjoint in $\RelMon$.  
\end{example}

\section{Monads in $\RelMon$}

A monad in the 2-category $\RelMon$ on a relational monoid $(A,*,e)$ 
is necessarily a monad in $\Rel$ on the underlying set $A$. Thus a monad on
$(A,*,e)$ is a preorder on the set $A$ which is, at the same time, an endomorphism
of the relational monoid $A$.
$$
\begin{tikzcd}
A\times A
	\ar[r,"\leq\times \leq"]
	\ar[d,"*"']
&
A\times A
	\ar[d,"*"]
\\
A
	\ar[ru,phantom,scalearrow={0.333}{start}{end}]
	\ar[ru,Rightarrow,to path = (start)--(end)]
	\ar[r,"\leq"']
&
A
\end{tikzcd}
\qquad
\begin{tikzcd}
1
	\ar[r,"e"]
	\ar[rd,"e"'{name=U}]
&
A
	\ar[d,"\leq"]
	\ar[Leftarrow,from=U,shorten >= 2pt , shorten <= 5pt]
\\
~
&
A
\end{tikzcd}
$$

Explicitly, a preorder $\leq$ on $A$ is a monad in $\RelMon$ if and only if
for all $a_1,a_2,a,a'\in A$
such that $(a_1,a_2)\* a\leq a'$, there are $a'_1,a'_2\in A$ such that $a_1\leq a'_1$,
$a_2\leq a'_2$ and $(a_1',a_2')\* a'$, moreover, for every $y\in E_A$, $y\leq x$ implies that $x\in E_A$.
\footnote{A reader who knows what the {\em Riesz decomposition property} means might wish to look at 
Example \ref{ex:RDP} now.} 
Let us look at some examples of monads in in $\RelMon$.

\begin{example}\label{ex:divides}
Consider the monoid $(\mathbb N,+,0)$. Equip $\mathbb N$ with the divisibility
partial order $\mid$, meaning that $a\mid a'$ if and only if there is $b\in\mathbb N$ such
that $ab=a'$. Assume that $a_1+a_2=a\mid a'$.  Then there is $b$ such that
$(a_1+a_2)b=a'$ and, putting $a'_1=a_1b$, $a'_2=a_2b$ we see that $a_1\mid
a'_1$, $a_2\mid a'_2$ and $a'_1+a'_2=a'$. Moreover $0\mid x$ implies that
$x=0$.  Therefore, $\mid$ is a monad on $\mathbb N$.
\end{example}

\begin{example}
Let $\Sigma$ be a set.  Consider the free monoid $\Sigma^*$, consisting
of all words over the alphabet $\Sigma$, equipped with the concatenation of words.
Recall, that a word $y$ is a subword of a word $x$ if we can obtain $y$
from $x$ by deleting the letters at some positions in $x$. For example,
the word $abc$ is a subword of the word $cacbacab$.
For $x,y\in\Sigma^*$ write $x\geq y$ if and only if $y$ is a subword of $x$. Then
$\geq$ is a monad on $\Sigma^*$. 

Indeed, if $y$ is a subword of $x1.x2$, then
$y=y_1.y_2$, where $y_1$ is a subword of $x_1$ and $y_2$ is a subword of $x_2$.
Moreover, $x$ is a subword of the empty word if and only if $x$ is empty.
Therefore, $\geq$ is a monad on the free monoid.
\end{example}

Let $\Endo(\RelMon)$ be a category, in which
\begin{itemize}
\item objects are all pairs $(A,f)$, where $f$ is an endomorphism $f\colon A\to A$ in $\RelMon$ 
\item a morphism $v\colon (A,f)\to(B,g)$ is an oplax commutative square
$$
\begin{tikzcd}
A
	\ar[r,"f"]
	\ar[d,"v"']
&
A
	\ar[d,"v"]
	\ar[ld,phantom,scalearrow={0.333}{start}{end}]
	\ar[ld,Rightarrow,to path = (start)--(end)]
\\
B
	\ar[r,"g"']
&
B
\end{tikzcd}
$$
where $v$ is a morphism of relational monoids.
\end{itemize}
We write $\Mnd(\RelMon)$ for the full subcategory of monads in $\Endo(\RelMon)$.
\begin{lemma}
Let $A,B$ be relational monoids, let $(f_i)_{i\in I}$ be a family of morphisms
with $f_i\colon A\to B$. Then the relation
$f=\bigcup_{i\in I}f_i\colon A\to B$ is a morphism of relational monoids.
\end{lemma}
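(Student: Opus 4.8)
The plan is to argue point-free, relying on two elementary properties of $\Rel$: the composition of relations distributes over arbitrary unions in each variable (immediate from the defining formula for composition of relations, since the existential quantifier commutes with unions), and both composition and the monoidal product $\times$ are monotone with respect to inclusion of relations. Recall that a relation $h\colon A\to B$ is a morphism of relational monoids exactly when the two displayed $2$-cells exist, i.e.\ when
\[
h\circ *_A\ \subseteq\ *_B\circ(h\times h)
\qquad\text{and}\qquad
h\circ e_A\ \subseteq\ e_B,
\]
where $*_A,e_A$ and $*_B,e_B$ denote the multiplications and the unit relations of $A$ and $B$. Thus it is enough to verify these two inclusions for $f=\bigcup_{i\in I}f_i$.

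For the unit inclusion, distributivity gives $f\circ e_A=\bigcup_{i\in I}(f_i\circ e_A)$; since each $f_i$ is a morphism we have $f_i\circ e_A\subseteq e_B$, and a union of relations all contained in $e_B$ is again contained in $e_B$, so $f\circ e_A\subseteq e_B$ (for $I=\emptyset$ this reads $\emptyset\subseteq e_B$). For the multiplicativity inclusion, distributivity gives $f\circ *_A=\bigcup_{i\in I}(f_i\circ *_A)$, so it suffices to show $f_i\circ *_A\subseteq *_B\circ(f\times f)$ for each fixed $i\in I$. Now $f_i\circ *_A\subseteq *_B\circ(f_i\times f_i)$ because $f_i$ is a morphism; $f_i\times f_i\subseteq f\times f$ because $f_i\subseteq f$ and $\times$ is monotone; and $*_B\circ(f_i\times f_i)\subseteq *_B\circ(f\times f)$ because composition is monotone. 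Chaining these three inclusions gives $f_i\circ *_A\subseteq *_B\circ(f\times f)$ for every $i$, and taking the union over $i$ on the left side yields $f\circ *_A\subseteq *_B\circ(f\times f)$.

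There is no real obstacle in this argument; it is forced once one invokes distributivity and monotonicity. The only point that deserves a moment's attention is that $f\times f$ is in general strictly larger than $\bigcup_{i\in I}(f_i\times f_i)$ --- indeed it equals $\bigcup_{i,j\in I}(f_i\times f_j)$ --- so the multiplicativity $2$-cell for $f$ need not be the union of the multiplicativity $2$-cells for the $f_i$. This causes no difficulty, since $*_B\circ(f\times f)$ sits on the \emph{larger} side of the inclusion we want, and monotonicity of $\times$ bridges the gap. The same reasoning can, of course, be rendered elementwise by unwinding the two inclusions into pointwise statements, in the style of conditions (L1)--(L2) of Proposition~\ref{prop:leftadjoint}.
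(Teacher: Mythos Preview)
Your proof is correct: the two inclusions you verify are exactly the defining conditions for a morphism in $\RelMon$, and your use of distributivity of composition over unions together with monotonicity of $\times$ is the natural way to handle them. The paper's own proof consists of the single word ``Trivial'', so your argument is simply a careful unpacking of that; the remark about $f\times f$ being strictly larger than $\bigcup_i f_i\times f_i$ is a nice touch but, as you note, irrelevant since the larger relation sits on the correct side.
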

\begin{proof}
Trivial.
\end{proof}

\begin{theorem}
$\Mnd(\RelMon)$ is a reflexive subcategory of $\Endo(\RelMon)$.
\end{theorem}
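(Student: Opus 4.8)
The plan is to construct, for every object $(A,f)$ of $\Endo(\RelMon)$, a universal map to a monad, i.e.\ to produce the reflector. The natural candidate for the reflection of $(A,f)$ is $(A,\bar f)$, where $\bar f$ is the \emph{reflexive-transitive closure} of the relation $f$, that is $\bar f=\bigcup_{n\geq 0} f^n$ (with $f^0=\id_A$). First I would check that $(A,\bar f)$ is indeed an object of $\Mnd(\RelMon)$. Since $\bar f$ is a preorder by construction, by the discussion preceding Example~\ref{ex:divides} it suffices to verify that $\bar f$ is an endomorphism of the relational monoid $A$, i.e.\ that the two 2-cells witnessing ``morphism of relational monoids'' exist. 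The unit 2-cell is immediate because $e\subseteq \id_A\subseteq \bar f$ composed appropriately. For the multiplicativity 2-cell one needs $\ast\circ(\bar f\times\bar f)\subseteq \bar f\circ\ast$; since $f$ is a morphism we have $\ast\circ(f\times f)\subseteq f\circ\ast$, and an easy induction on $m+n$ gives $\ast\circ(f^m\times f^n)\subseteq f^{m+n}\circ\ast\subseteq\bar f\circ\ast$, and then taking the union over all $m,n$ (which is legitimate since composition in $\Rel$ distributes over unions of relations) yields the claim. Alternatively one can invoke the Lemma just proved, observing that each $f^n$ is a morphism of relational monoids (composites of morphisms are morphisms) and that $\bar f$ is the union of this family.

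Next I would exhibit the unit of the reflection: the identity relation $\id_A\colon A\to A$ provides a morphism $(A,f)\to(A,\bar f)$ in $\Endo(\RelMon)$, since the required oplax square says $\bar f\circ\id_A\supseteq \id_A\circ f$, i.e.\ $f\subseteq\bar f$, which holds. (Here I am using the convention fixed in the excerpt that an arrow in $\Endo(\RelMon)$ is an \emph{oplax} square, so the inclusion goes the right way.) Then I would prove the universal property: given any monad $(B,g)$ and any morphism $v\colon(A,f)\to(B,g)$ in $\Endo(\RelMon)$, I must show $v$ factors (uniquely, automatically, since $\Rel$ is locally posetal so there is at most one 2-cell and hence composition is strictly determined) through $(A,\bar f)$ via $\id_A$; concretely this amounts to checking that $v$ is also a morphism $(A,\bar f)\to(B,g)$, i.e.\ that $g\circ v\supseteq v\circ\bar f$. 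From the oplax square for $v$ we have $g\circ v\supseteq v\circ f$; iterating, $g^n\circ v\supseteq v\circ f^n$ for all $n$, using associativity and monotonicity of composition; since $g$ is a monad, $g^n\subseteq g$ (transitivity, with reflexivity handling $n=0$), so $g\circ v\supseteq g^n\circ v\supseteq v\circ f^n$ for every $n$, and taking the union over $n$ gives $g\circ v\supseteq v\circ\bar f$, as required.

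Finally I would remark that uniqueness of the factorization is free: because every hom-poset of $\Rel$, and hence of $\RelMon$ and of $\Endo(\RelMon)$, has at most one 2-cell between any two 1-cells, and the underlying relation of the factorizing morphism is forced to be $v$ itself (as $\id_A$ is epi in the relevant sense and the factorization must compose to $v$), the reflector is well-defined and the counit-free description of a reflective subcategory applies. I do not expect any serious obstacle here; the only point that needs a little care is bookkeeping the \emph{direction} of all the 2-cells --- making sure the oplax convention for $\Endo(\RelMon)$ morphisms is used consistently, so that the inclusions $f\subseteq\bar f$ and $g\circ v\supseteq v\circ\bar f$ line up correctly --- together with the routine verification that composition of relations commutes with arbitrary unions, which is what makes the passage ``$f^n$ for all $n$'' $\Rightarrow$ ``$\bar f$'' go through.
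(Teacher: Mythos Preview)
Your proposal is correct and follows essentially the same approach as the paper: define the reflector as the reflexive-transitive closure $\bar f=\bigcup_{n\ge 0} f^n$, use the preceding Lemma (or the equivalent direct induction you sketch) to see $\bar f$ is a $\RelMon$-endomorphism and hence a monad, take $\id_A$ as the unit, and verify universality by iterating the oplax square and using transitivity of the target monad. The only cosmetic difference is that the paper pastes squares to get $u\circ f^n\subseteq{\leq}\circ u$ directly, while you first obtain $v\circ f^n\subseteq g^n\circ v$ and then invoke $g^n\subseteq g$; these are the same argument.
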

\begin{proof}
Let $(A,f)$ be an object of $\Endo(\RelMon))$.
Write $cl(f)$ for the reflexive and transitive closure of the relation $f$.
As $cl(f)=\bigcup_{i=0}^\infty f^i$ is a union of a family of morphisms,
$cl(f)$ is an endomorphism of $A$, so $(A,cl(f))$ is an object of $\Endo(\RelMon)$.
Moreover, since $cl(f)$ is a preorder, $(A,cl(f))$ is an object of $\Mnd(\RelMon)$.
We claim that the morphism 
$$
\begin{tikzcd}
A
	\ar[r,"f"]
	\ar[d,"\id_A"']
&
A
	\ar[d,"\id_A"]
	\ar[ld,phantom,scalearrow={0.333}{start}{end}]
	\ar[ld,Rightarrow,to path = (start)--(end)]
\\
A
	\ar[r,"cl(f)"]
&
A
\end{tikzcd}
$$
is a reflection, that means, for every object $(B,\leq)$ of $\Mnd(\RelMon)$ and for every
arrow $u\colon (A,f)\to (B\leq)$ there is unique dotted arrow such that
$$
\begin{tikzcd}
(A,f)
	\ar[rd,"u"]
	\ar[d,"\id_A"']
\\
(A,cl(f))
	\ar[r,dotted]
&
(B,\leq)
\end{tikzcd}
$$
commutes.
Note that, if the dotted arrow exists, then it must be induced by $u$. So it
suffices to prove that $u$ induces a morphism in $\Endo(\RelMon)$ from
$(A,cl(f))$ to $(B,\leq)$.

We claim that, for all $n\in\mathbb N$, $u$ induces a morphism in $\Endo(\RelMon)$
from $(A,f^n)$ to $(B,\leq)$. For $n=0$ this is trivial. Suppose that our
claim is valid for $n=k$. Pasting together the 2-cells
$$
\begin{tikzcd}
A
	\ar[r,"f^k"]
	\ar[d,"u"']
&
A
	\ar[d,"u"]
	\ar[r,"f"]
	\ar[ld,phantom,scalearrow={0.333}{start}{end}]
	\ar[ld,Rightarrow,to path = (start)--(end)]
&
A
	\ar[d,"u"]
	\ar[ld,phantom,scalearrow={0.333}{startt}{endt}]
	\ar[ld,Rightarrow,to path = (startt)--(endt)]
\\
B
	\ar[r,"\leq"']
&
B
	\ar[r,"\leq"']
&
B
\end{tikzcd}
$$
gives us the 2-cell
$$
\begin{tikzcd}
A
	\ar[r,"f^{(k+1)}"]
	\ar[d,"u"']
&
A
	\ar[d,"u"]
	\ar[ld,phantom,scalearrow={0.333}{start}{end}]
	\ar[ld,Rightarrow,to path = (start)--(end)]
\\
B
	\ar[r,"\leq"']
&
B
\end{tikzcd}
$$
Thus, for all $n\in\mathbb N$, $(u\circ f^n)\subseteq(\leq\circ u)$. Taking the union
of these inclusions over $n\in\mathbb N$ gives us the inclusion
$(u\circ cl(f))\subseteq(\leq\circ u)$, meaning that $u$ induces a morphism
in $\Endo(\RelMon)$.
\end{proof}
Thus, every endomorphism in $\RelMon$ generates a monad in $\RelMon$.
\begin{example}
Consider the monoid $(\mathbb N,+,0)$, fix $k\in\mathbb N\setminus\{0\}$ 
and the endomorphism $f_k\colon \mathbb N\to\mathbb N$ given by
$f_k(a)=ka$. The reflection of the object $(\mathbb N,f_k)$ of $\Endo(\RelMon)$ is a monad $(\mathbb N,\leq_k)$,
where the preorder $\leq_k$ is given by the rule 
$a\leq_k b$ if and only if $a\mid b$ and $b/a$ is a power of $k$. 
\end{example}

\section{Modular lattices as monads in $\RelMon$}

We have seen (Example \ref{ex:posetismonoid}), that for every poset 
the set of all quotients $Q(A)$ is a relational monoid.
Let $A$ be a lattice.
There is a canonical partial order $\up$ on $Q(A)$ given by the rule $b/a\up d/c$ if and only if
$a=b\wedge c$ and $d=b\vee c$. This partial order plays a central role in the theory of
lattice congruences  (see \cite{Gra:GLT}).

Recall, that a lattice is {\em modular} if and only if, for all $x\leq y$, $y\wedge(x\vee z)=x\vee(y\wedge z)$.

\begin{proposition}
Let $A$ be a lattice. Then $(Q(A),\up)$ is a monad in $\RelMon$ if and only if $A$ is a modular lattice.
\end{proposition}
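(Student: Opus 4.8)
The plan is to lean on the explicit description of monads in $\RelMon$ recalled above. Since the text already grants that $\up$ is a partial order on $Q(A)$, it is in particular a preorder, so $(Q(A),\up)$ is a monad in $\RelMon$ precisely when $\up$ is moreover an endomorphism of the relational monoid $Q(A)$ — that is, when the two conditions in the characterization of monads hold: the ``decomposition'' condition relating $\*$ and $\up$, and the closure of the set of units under $\up$. I would first dispose of the unit condition, then unwind the decomposition condition into a lattice identity, and finally recognize that identity as the modular law. The only genuinely delicate point is the unwinding: one must keep track of which coordinate of a quotient is its top and which is its bottom when forming products, and then verify that \emph{every} equation emerging from the condition is automatic except a single one, which turns out to be exactly the modular law (and not something stronger).

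For the unit condition: the units of $Q(A)$ are the trivial quotients $a/a$, and $a/a\up d/c$ unfolds to $a=a\wedge c$ and $d=a\vee c$; the first equation says $a\le c$, hence $d=a\vee c=c$ and $d/c=c/c$ is again a unit, so this condition holds for every lattice. For the decomposition condition, observe that a product $(b_1/a_1,b_2/a_2)\*z$ forces $a_2=b_1$ and $z=b_2/a_1$. Hence a hypothesis $(a_1,a_2)\*a\up a'$ amounts to a chain $a\le b\le d$ in $A$ (with $a_1=b/a$, $a_2=d/b$, and the product $a=d/a$) together with some $a'=f/e$ satisfying $d/a\up f/e$, i.e. $d\wedge e=a$ and $d\vee e=f$. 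Then factors $a_1'$ with $b/a\up a_1'$ and $a_2'$ with $d/b\up a_2'$ whose product is $f/e$ are forced by the product rule to have the shape $a_1'=b'/e$ and $a_2'=f/b'$ with $b'=b\vee e$; writing out $b/a\up b'/e$ and $d/b\up f/b'$ then produces exactly the equations $a=b\wedge e$, $b=d\wedge(b\vee e)$, and $f=d\vee(b\vee e)$. The third holds automatically because $b\le d$; the first holds automatically too, since $b\le d$ gives $b\wedge e\le d\wedge e=a$, while $a\le b$ and $a\le e$ give $a\le b\wedge e$. So the decomposition condition is equivalent to the following statement: for all $b\le d$ in $A$ and all $e\in A$ with $d\wedge e\le b$ one has $d\wedge(b\vee e)=b$ (taking $a:=d\wedge e$).

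Finally I would identify this identity with modularity. If $A$ is modular, the modular law applied to $b\le d$ gives $d\wedge(b\vee e)=b\vee(d\wedge e)=b$ whenever $d\wedge e\le b$. Conversely, given $x\le y$ and an arbitrary $z\in A$, set $b:=x\vee(y\wedge z)$, $d:=y$ and $e:=z$; then $b\le d$ and $d\wedge e=y\wedge z\le b$, so the identity yields $y\wedge(x\vee z)=y\wedge(b\vee e)=b=x\vee(y\wedge z)$, which is the modular law. This establishes the equivalence and completes the argument.
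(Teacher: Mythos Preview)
Your argument is correct and follows essentially the same route as the paper: both reduce the square condition to the existence of a suitable intermediate element $b'$, observe that the relation $b/a\up b'/e$ forces $b'=b\vee e$, and then check that the remaining constraint $d\wedge(b\vee e)=b$ is exactly the modular law via the substitution $b=x\vee(y\wedge z)$, $d=y$, $e=z$. The only difference is presentational: you make the forcing of $b'$ explicit up front and thereby recast the condition as a single equational identity, whereas the paper leaves it phrased as an existence statement and extracts $b'=b\vee a'$ in the course of each direction.
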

\begin{proof}
The statement that $(A,\up)$ is a monad means that the diagrams
$$
\begin{tikzcd}
1
	\ar[r,"e"]
	\ar[rd,"e"'{name=U}]
&
Q(A)
	\ar[d,"\up"]
	\ar[Leftarrow,from=U,shorten >= 2pt , shorten <= 5pt]
\\
~
&
B
\end{tikzcd}
\qquad
\begin{tikzcd}
Q(A)\times Q(A)
	\ar[r,"\up\times\up"]
	\ar[d,"\circ"]&
Q(A)\times Q(A)
	\ar[d,"\circ"]
\\
Q(A)
	\ar[r,"\up"]
	\ar[ru,phantom,scalearrow={0.25}{start}{end}]
	\ar[ru,Rightarrow,to path = (start)--(end)]
	&
Q(A)
\end{tikzcd}
$$
commute. The commutativity of the triangle diagram means that $a/a\up c/b$ implies
that $b=c$. This is easily seen to be true for every lattice $A$.

The commutativity of the square is equivalent to the following property of the lattice $A$:

(**) For every $b/a,c/b,c'/a'\in Q(A)$ such that 
$(b/a)\circ(c/b)=c/a\up c'/a'$ there exists $b'\in A$ such that $a'\leq b'\leq c'$ and 
$b/a\up b'/a'$, $c/b\up c'/b'$.

Let us prove that the modularity of $A$ implies the property (**). Suppose that $A$ is a
modular lattice and let $a,b,c,a',c'$ be as in the assumption of (**). Let us put $b'=b\vee a'$
so that $b/a\up b'/a'$.
We claim that $c/b\up c'/b'$, that means, $c\vee b'=c'$, $c\wedge b'=b$.
Since $c/a\up c'/a'$, we see that
$$
c\vee b'=c\vee b\vee a'=c\vee a'=c'.
$$
and, applying the modular law with $b\leq c$, we obtain
$$
c\wedge b'=c\wedge(b\vee a')=b\vee(c\wedge a')=b\vee a=b,
$$
which means that $c/b\up c'/b'$.

Suppose that $A$ is a lattice satisfying the (**) property. Let $x,y,z\in A$ be such that
$x\leq y$. We need to prove that $y\wedge(x\vee z)=x\vee(y\wedge z)$. Put $a=y\wedge z$,
$b=x\vee(y\wedge z)$, $c=y$, $a'=z$, $c'=y\vee z$. We see that $a,b,c,a',c'$ satisfy the
assumptions of (**), hence there is a $b'$ such that 
$a'\leq b'\leq c'$ and $b/a\up b'/a'$, $c/b\up c'/b'$.
This implies that $b=c\wedge b'=c\wedge(b\vee a')$ and therefore
$$
x\vee(y\wedge z)=y\wedge (x\vee (y\wedge z)\vee z)=y\wedge(x\vee z).
$$
\end{proof}
For modular lattices $A$ and $B$ and a lattice morphism
$v:A\to B$, we write $Q(v):Q(A)\to Q(B)$ for the mapping
given by the rule $Q(v)(a/b)=v(a)/v(b)$. 
\begin{corollary}
$Q$ is a functor from the category of modular lattices to the category $\Mnd(\RelMon)$.
\end{corollary}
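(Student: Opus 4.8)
The plan is to verify the three functor axioms for the assignment $A \mapsto Q(A)$ on objects and $v \mapsto Q(v)$ on morphisms. The key input is the preceding proposition, which tells us that for a modular lattice $A$ the relational monoid $(Q(A),\up)$ is a monad in $\RelMon$, so the object assignment already lands in $\Mnd(\RelMon)$; what remains is to check that $Q(v)$ is a well-defined morphism in $\Mnd(\RelMon)$ and that composition and identities are preserved.

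First I would check that $Q(v)\colon Q(A)\to Q(B)$ is well-defined as a set map: if $b/a\in Q(A)$, i.e.\ $a\leq b$, then since a lattice morphism $v$ preserves $\wedge$ and $\vee$ it is order-preserving, so $v(a)\leq v(b)$, giving $v(b)/v(a)\in Q(B)$. Next I would verify that $Q(v)$ is a morphism of relational monoids in the sense of the two 2-cells defining $\RelMon$-morphisms: the unit 2-cell asks that $Q(v)$ carry trivial quotients $a/a$ to quotients lying in $E_{Q(B)}$, which is immediate since $Q(v)(a/a)=v(a)/v(a)$; the multiplication 2-cell asks that whenever $(b/a,d/c)\* (d/a)$ in $Q(A)$ — that is, $b=c$ — then $\bigl(Q(v)(b/a),Q(v)(d/c)\bigr)\* Q(v)(d/a)$ in $Q(B)$, which holds because $v(b)=v(c)$ follows from $b=c$, and $v(d)/v(a)$ is exactly the required composite. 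Then I would check the extra monad-morphism conditions, i.e.\ that the square and triangle with $\up$ on the source and $\up$ on the target oplax-commute (the condition for a morphism in $\Endo(\RelMon)$, hence in the full subcategory $\Mnd(\RelMon)$): this reduces to showing $b/a\up d/c$ in $Q(A)$ implies $v(b)/v(a)\up v(d)/v(c)$ in $Q(B)$, which says $a=b\wedge c\Rightarrow v(a)=v(b)\wedge v(c)$ and $d=b\vee c\Rightarrow v(d)=v(b)\vee v(c)$ — again immediate from $v$ preserving $\wedge$ and $\vee$.

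Finally I would dispatch functoriality: $Q(\id_A)(b/a)=\id_A(b)/\id_A(a)=b/a$, so $Q(\id_A)=\id_{Q(A)}$; and for composable lattice morphisms $v\colon A\to B$, $w\colon B\to C$ we get $Q(w\circ v)(b/a)=w(v(b))/w(v(a))=Q(w)\bigl(v(b)/v(a)\bigr)=\bigl(Q(w)\circ Q(v)\bigr)(b/a)$, so $Q(w\circ v)=Q(w)\circ Q(v)$. Since $Q(v)$ is a (graph of a) mapping, composition of these 1-cells in $\RelMon$ is just composition of maps, so there is no subtlety about strictness here.

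Every step is routine; the only point requiring the genuine content established earlier is that the target object $(Q(A),\up)$ actually is a monad, which is precisely the preceding proposition and is where modularity is used. So I expect no real obstacle — the proof is essentially a bookkeeping argument that a lattice homomorphism, being a $\wedge$- and $\vee$-homomorphism, transports all the relevant relational structure ($E$, $*$, and $\up$) on quotients functorially. If anything merits a sentence of care, it is spelling out that the oplax square for a morphism of monads becomes an \emph{equality} of composites here because $Q(v)$ is single-valued and the relations $\up$ on both sides are defined by the same meet/join formulas that $v$ respects.
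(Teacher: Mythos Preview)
Your proposal is correct and is exactly the routine verification the paper has in mind; the paper itself omits the proof entirely, stating only that it is straightforward. Your fleshing-out of the details---well-definedness of $Q(v)$, the two 2-cells making $Q(v)$ a $\RelMon$-morphism, preservation of $\up$ via preservation of $\wedge$ and $\vee$, and the identity/composition laws---is precisely what that omission covers.
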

\begin{proof}
The proof is straightforward and is thus omitted.
\end{proof}

\section{Quantum structures as relational monoids}

Let $(P,+,0)$ be a partial algebra with a nullary operation $0$
and a binary partial operation $+$. Denote the domain of $+$ by
$\perp$. $P$ is called a {\em partial abelian monoid}
if and only if for all $a,b,c\in P$ the following conditions are satisfied:
\begin{enumerate}
\item[(P1)] $b\perp c$ and $a\perp b+c$ implies
        $a\perp b$, $a+b\perp c$, $a+(b+c)=(a+b)+c$.
\item[(P2)] $a\perp b$ implies $b\perp a$ and $a+b=b+a$
\item[(P3)] $a\perp 0$ and $a+0=a$.
\end{enumerate}
A partial abelian monoid $P$ 
is {\em positive} if and only if, for all $a,b\in P$, $a+b=0$ implies $a=b=0$.
A partial abelian monoid is {\em cancellative} if and only if, for all $a,b,c\in P$,
$a+c=a+b$ implies $b=c$. A cancellative and positive partial abelian monoid
is called a {\em generalized effect algebra}.

On every generalized effect algebra, there is a canonical partial order given by the rule
$a\leq c$ if and only if there is $b$ such that $a+b=c$. A generalized effect
algebra that is upper bounded is an {\em effect algebra}. 
Effect algebras were introduced in \cite{FouBen:EAaUQL}, the 
definition we give here is different but equivalent with the original one. See also
\cite{KopCho:DP} and \cite{GiuGre:TaFLfUP} for other axiomatizations of effect
algebras.

The prototype effect
algebra is $(\mathcal E(\mathbb H),\oplus,0,I)$, where $\mathbb H$ is a
Hilbert space and $\mathcal E(\mathbb H)$ consists of all self-adjoint
operators $A$ of $\mathbb H$ such that $0\leq A\leq I$. For 
$A,B\in\mathcal E(\mathbb H)$, $A\oplus B$ is defined iff $A+B\leq I$ and then
$A\oplus B=A+B$. The set $\mathcal E(\mathbb H)$ plays an important role in the
foundations of quantum mechanics \cite{Lud:FoQM}, \cite{BusGraLah:OQP}.

It is obvious that every generalized effect algebra is a monoid in $\RelMon$.
Let $A,B$ be generalized effect algebras.  A mapping $f\colon A\to B$ is
a {\em morphism of generalized effect algebras} if and only if $f(0)=0$ and for all
$x,y\in A$ such that $x\perp y$ we have $f(x)\perp f(y)$ and $f(x+y)=f(x)+f(y)$.
Note that every morphism of generalized effect 
algebras is a morphism in $\RelMon$. Thus,
the category of generalized effect algebras is a subcategory of $\RelMon$.

\begin{example}\label{ex:RDP}
Let $(E,+,0)$ be a generalized effect algebra. What does it mean that the canonical
partial order $\geq$ is a monad in $\RelMon$ on $E$?
The square diagram means that, for all $x_1,x_2,y\in E$, $x_1+x_2\geq y$ implies that
there are $y_1,y_2\in E$ such that $x_1\geq y_1$, $x_2\geq y_2$ and $y=y_1+y_2$. This
is a well-known condition, called the {\em Riesz decomposition property}
\cite{Goo:POAGwI,JenPul:QoPAMatRDP}.
The triangle diagram means that $0\geq x$ implies that $x=0$, which is true in
any generalized effect algebra. Thus, $\geq$ is a
monad on a generalized effect algebra if and only if the generalized effect
algebra satisfies the Riesz decomposition property.
\end{example}

Similarly as in $\Rel$, a monad $(A,\leq)$ in $\RelMon$ arises from an adjunction if and only if
the preorder $\leq$ is an equivalence.

Explicitly, this gives us the following conditions:
\begin{enumerate}
\item[(M1)] $\sim$ is an equivalence.
\item[(M2)] The diagram
$$
\begin{tikzcd}
A\times A
	\ar[r,"\sim\times\sim"]
	\ar[d,"*"]
&
A\times A
	\ar[d,"*"]
\\
A
	\ar[ru,phantom,scalearrow={0.333}{start}{end}]
	\ar[ru,Rightarrow,to path = (start)--(end)]
	\ar[r,"\sim"]
&
A
\end{tikzcd}
$$
commutes.
\item[(M3)] If $x\sim y$ and $y$ is a unit of $A$, then $x$ is a unit of $A$.
\end{enumerate}

\begin{proposition}\label{prop:congruence}
\cite{ChePul:SILiPAM}
Let $(A,+,0)$ be a partial abelian monoid. Let $\sim\subseteq A\times A$ be 
a relation satisfying the following:
\begin{enumerate}
\item[(C1)]$\sim$ is an equivalence relation.
\item[(C2)]If $x_1+y_1$ exists, $x_2+y_2$ exists, $x_1\sim x_2$ and $y_1\sim y_2$,
then $x_1+y_1\sim x_2+y_2$.
\item[(C5)]If $x+y$ exists and $(x+y)\sim z$, then there are $x_1,y_1\in A$ such that
$x_1\sim x$, $y_1\sim y$ and $x_1+y_1=z$.
\footnote{The notation (C1), (C2) and (C5) is inherited from the original paper \cite{ChePul:SILiPAM}. 
It coincides with the notation used later in several other papers and in the book \cite{DvuPul:NTiQS}.}
\end{enumerate}
Define a partial operation $+$ on the quotient $A/\sim$ by the rule
$[x]_\sim+[y]_\sim=[x_1+y_1]_\sim$, where $x_1,y_1\in A$ are such that 
$x_1\sim x$, $y_1\sim y$ and $x_1+y_1$ exists.
Then $+$ is well defined on $A/\sim$ and 
$(A/\sim,+,[0]_\sim)$ is a partial abelian monoid.
\end{proposition}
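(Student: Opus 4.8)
The plan is to check, in order, that the partial operation on $A/\sim$ is well defined, and then the three axioms (P3), (P2) and finally (P1) from the definition of a partial abelian monoid. Throughout I would write $[x]$ for $[x]_\sim$ and adopt the convention (implicit in the statement) that $[x]\perp[y]$ means precisely that there exist $x_1\sim x$ and $y_1\sim y$ with $x_1+y_1$ defined in $A$, and then $[x]+[y]:=[x_1+y_1]$.

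For well-definedness I would take $x_1,x_1'\sim x$ and $y_1,y_1'\sim y$ with both $x_1+y_1$ and $x_1'+y_1'$ defined, observe that (C1) gives $x_1\sim x_1'$ and $y_1\sim y_1'$, and conclude from (C2) that $x_1+y_1\sim x_1'+y_1'$, i.e. $[x_1+y_1]=[x_1'+y_1']$. Since the admissible witnesses depend only on the classes $[x]$ and $[y]$, this shows $[x]+[y]$ is independent of all choices and single-valued, so $+$ really is a partial operation on $A/\sim$.

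Axioms (P3) and (P2) should be immediate. For (P3), $a+0$ is defined and equals $a$, so $[a]\perp[0]$ and $[a]+[0]=[a]$. For (P2), if $[a]\perp[b]$ is witnessed by $a_1\sim a$, $b_1\sim b$, then (P2) in $A$ makes $b_1+a_1$ defined and equal to $a_1+b_1$, giving $[b]\perp[a]$ and $[b]+[a]=[a]+[b]$.

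The real content is (P1), and this is where (C5) enters. I would assume $[b]\perp[c]$ and $[a]\perp[b]+[c]$, choose witnesses $b_1\sim b$, $c_1\sim c$ with $b_1+c_1$ defined (so $[b]+[c]=[b_1+c_1]$), and then witnesses $a_2\sim a$, $d\sim b_1+c_1$ with $a_2+d$ defined. The key move is to apply (C5) to $b_1+c_1\sim d$, obtaining $b_2\sim b$ and $c_2\sim c$ with $b_2+c_2=d$; then $a_2+(b_2+c_2)=a_2+d$ is defined, so (P1) in $A$ gives that $a_2+b_2$ is defined, $(a_2+b_2)+c_2$ is defined, and $a_2+(b_2+c_2)=(a_2+b_2)+c_2$. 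Transporting this back through the quotient: $[a]\perp[b]$ (witnessed by $a_2,b_2$) with $[a]+[b]=[a_2+b_2]$; then $([a]+[b])\perp[c]$ (witnessed by $a_2+b_2,c_2$) with $([a]+[b])+[c]=[(a_2+b_2)+c_2]$; and $[a]+([b]+[c])=[a]+[b_2+c_2]=[a]+[d]=[a_2+d]=[(a_2+b_2)+c_2]$, which matches. I expect the only real obstacle to be bookkeeping: making sure every quotient sum that appears is justified by an explicit pair of $\sim$-related witnesses, and that the two sides of the associativity identity are recognized as the same $\sim$-class — which is exactly the chain of equalities just sketched. The commutative-monoid aspects are then fully handled by (P2), (C1) and (C2) as above.
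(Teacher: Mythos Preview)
The paper does not give a proof of this proposition at all: it is quoted, with the citation \cite{ChePul:SILiPAM} attached, as a known result, and the text moves on immediately to remarks and the next proposition. So there is no ``paper's own proof'' to compare against.

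That said, your argument is correct and is the standard one. Well-definedness via (C1)+(C2), and (P2), (P3) in the quotient, are indeed routine. Your handling of (P1) is the right use of (C5): from $[a]\perp[b]+[c]$ you produce $a_2\sim a$ and $d\sim b_1+c_1$ with $a_2+d$ defined, then (C5) rewrites $d$ as $b_2+c_2$ with $b_2\sim b_1\sim b$, $c_2\sim c_1\sim c$, and (P1) in $A$ does the rest. The only thing worth making explicit when you write it up is the transitivity step $b_2\sim b_1\sim b$ (and similarly for $c$), since (C5) gives $b_2\sim b_1$, not $b_2\sim b$ directly; you clearly have this in mind but it is worth one word. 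Otherwise the bookkeeping is exactly as you describe and there is no hidden obstacle.
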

Let us note that the conditions from Proposition \ref{prop:congruence}
are not necessary for an equivalence to induce a partial abelian monoid structure
on $A/\sim$, they are merely sufficient.
\begin{proposition}
Let $(A,+,0)$ and $(B,+,0)$ be a partial abelian monoids, let $f\colon A\to B$ be a
left adjoint in $\RelMon$. Then the monad on $A$ arising from the adjunction
$f\isleftadjoint f^\dagger$ satisfies the conditions in Proposition \ref{prop:congruence}.
\end{proposition}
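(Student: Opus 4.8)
The first thing I would do is make the monad explicit. Since $f$ is a left adjoint in $\RelMon$ it is in particular a left adjoint in $\Rel$, hence (the graph of) a mapping, and as noted before Section~4 the monad generated by the adjunction $f\isleftadjoint f^\dagger$ on $A$ has underlying $1$-cell $f^\dagger\circ f$, the kernel of $f$. Writing $\sim$ for this relation, $a\sim a'$ holds iff $f(a)=f(a')$, so $\sim$ is evidently an equivalence relation and (C1) is immediate.

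Next I would record what ``$f$ is a $1$-cell of $\RelMon$'' means once we know $f$ is a mapping of partial abelian monoids: precisely $f(0)=0$ and, whenever $x\perp y$, also $f(x)\perp f(y)$ and $f(x+y)=f(x)+f(y)$. Condition (C2) then follows by a one-line computation: if $x_1+y_1$ and $x_2+y_2$ exist with $x_1\sim x_2$ and $y_1\sim y_2$, then $f(x_1+y_1)=f(x_1)+f(y_1)=f(x_2)+f(y_2)=f(x_2+y_2)$, so $x_1+y_1\sim x_2+y_2$.

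For (C5) I would finally use that $f$ is a \emph{left adjoint}, not merely a morphism. Assuming $x+y$ exists and $(x+y)\sim z$, i.e.\ $f(x+y)=f(z)$, the morphism property rewrites this as $(f(x),f(y))\* f(z)$. I would then invoke condition (L1) of Proposition~\ref{prop:leftadjoint} with $b_1=f(x)$, $b_2=f(y)$, $a=z$ to obtain $x_1,y_1\in A$ with $f(x_1)=f(x)$, $f(y_1)=f(y)$ and $(x_1,y_1)\* z$; unravelling, $x_1\sim x$, $y_1\sim y$ and $x_1+y_1=z$, which is exactly (C5). (In fact (C5) for the kernel relation is literally the square-diagram condition (M2) for the monad $\sim$, so this step is automatic once one knows $\sim$ is a monad in $\RelMon$.)

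I do not expect a genuine obstacle: the proof is just a translation of the $2$-categorical data into elementwise statements. The one point worth highlighting is that (C1) and (C2) need only that $f$ is an ordinary morphism of partial abelian monoids, whereas (C5) is exactly the non-automatic part of left-adjointness — condition (L1), equivalently the monad square (M2).
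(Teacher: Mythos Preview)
Your proof is correct and follows essentially the same route as the paper: identify $\sim=f^\dagger\circ f$ as the kernel equivalence for (C1), use the morphism property of $f$ for (C2), and invoke (L1) of Proposition~\ref{prop:leftadjoint} for (C5). Your added parenthetical remark that (C5) is precisely the monad square (M2) is a nice observation not made explicit in the paper's proof.
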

\begin{proof}
The monad $\sim:=f^\dag\circ f$ is an equivalence, so (C1) is satisfied.

Let $x_1,x_2,y_1,y_2$ be as in the assumptions of (C2). In this context
that means $f(x_1)=f(x_2)$, $f(y_1)=f(y_2)$. Since $f$ is a morphism in $\RelMon$,
$$
\begin{tikzcd}
A\times A
	\ar[r,"f\times f"]
	\ar[d,"+"']
&
B\times B
	\ar[d,"+"]
\\
A
	\ar[ru,phantom,scalearrow={0.333}{start}{end}]
	\ar[ru,Rightarrow,to path = (start)--(end)]
	\ar[r,"f"']
&
B
\end{tikzcd}
$$
commutes, so the existence of $x_1+x_2$ in $A$ implies the existence of $f(x_1)+f(x_2)$
in $B$ and $f(x_1+x_2)=f(x_1)+f(x_2)$.
Similarly, $f(y_1+y_2)=f(y_1)+f(y_2)$, so $f(x_1+x_2)=f(y_1+y_2)$, meaning that
$x_1+x_2\sim y_1+y_2$.

Suppose that $x+y$ exists and that $x+y\sim z$, that means, $f(x+y)=f(z)$. By Proposition
\ref{prop:leftadjoint} (L1), there are $x_1,y_1$ such that $f(x_1)=f(x)$, $f(y_1)=f(y)$ and
$x_1+y_1=z$, so (C5) holds.
\end{proof}

\begin{proposition}
Let $\sim$ be a relation on a partial abelian monoid $(A,+,0)$, satisfying the conditions from
Proposition \ref{prop:congruence} and an additional condition
$$
x\sim 0\implies x=0.
$$
Then the quotient map $f\colon A\to A/\sim$ given by $f(x)=[x]\sim$ is a left adjoint in $\RelMon$ and
$\sim=f^\dag\circ f$.
\end{proposition}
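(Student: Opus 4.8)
The plan is to verify, in order, the three things the statement asserts: that $f$ is a morphism in $\RelMon$, that $f$ is a left adjoint in $\RelMon$ (equivalently, that $f^\dag$ is a morphism in $\RelMon$ and $f$ is a mapping), and that $\sim\,=f^\dag\circ f$. The last equality is the heart of the matter and will be where condition $x\sim 0\implies x=0$ is actually used. Everything is purely a matter of unwinding definitions; the only genuine content is keeping track of which of (C1), (C2), (C5) feeds which clause.

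First I would record that $f$ is a genuine mapping: since $\sim$ is an equivalence relation (C1), every $x\in A$ has exactly one $\sim$-class, so $f(x)=[x]_\sim$ is single-valued and total, i.e.\ a graph of a mapping, hence a left adjoint in $\Rel$ with right adjoint $f^\dag$ by the Fact in Section 2. Next I would check that $f$ is a morphism of relational monoids. The unit condition is immediate: $f(0)=[0]_\sim$ is the unit of $A/\sim$. For the multiplication 2-cell we need: whenever $x+y$ exists in $A$, then $f(x)+f(y)$ exists in $A/\sim$ and equals $f(x+y)$. But $[x]_\sim+[y]_\sim$ is defined (by Proposition~\ref{prop:congruence}) precisely as $[x_1+y_1]_\sim$ for any $x_1\sim x$, $y_1\sim y$ with $x_1+y_1$ existing; taking $x_1=x$, $y_1=y$ witnesses existence and gives $f(x)+f(y)=[x+y]_\sim=f(x+y)$, which is exactly the required inclusion $f\circ{+}\subseteq{+}\circ(f\times f)$ (in fact an equality on the relevant part).

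Then I would verify conditions (L1) and (L2) of Proposition~\ref{prop:leftadjoint}, which by that proposition is exactly what it takes for the mapping $f$ to be a left adjoint in $\RelMon$. Condition (L2) says: if $f(x)$ is a unit of $A/\sim$, i.e.\ $[x]_\sim=[0]_\sim$, i.e.\ $x\sim 0$, then $x$ is a unit of $A$, i.e.\ $x=0$ --- and this is precisely the extra hypothesis $x\sim 0\implies x=0$. Condition (L1) says: if $[b_1]_\sim+[b_2]_\sim=[x]_\sim$ in the quotient, there are $x_1,x_2\in A$ with $f(x_i)=[b_i]_\sim$ and $x_1+x_2$ existing with $f(x_1+x_2)=[x]_\sim$; unwinding the definition of $+$ on $A/\sim$, $[b_1]_\sim+[b_2]_\sim$ equals $[b_1'+b_2']_\sim$ for some $b_i'\sim b_i$ with $b_1'+b_2'$ defined, so $b_1'+b_2'\sim x$; applying (C5) to $b_1'+b_2'\sim x$ gives $x_1\sim b_1'\sim b_1$, $x_2\sim b_2'\sim b_2$ with $x_1+x_2=x$, which is exactly (L1).

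Finally I would prove $\sim\,=f^\dag\circ f$. By definition of composition in $\Rel$, $(a,a')\in f^\dag\circ f$ iff there is a class $c\in A/\sim$ with $f(a)=c$ and $f(a')=c$, i.e.\ iff $[a]_\sim=[a']_\sim$, i.e.\ iff $a\sim a'$; so this identity in fact needs nothing beyond (C1) and holds on the nose. I expect the main (mild) obstacle to be purely bookkeeping: making sure that in the (L1) argument one legitimately replaces the witnesses produced by the definition of $+$ on $A/\sim$ by witnesses $\sim$-equivalent to the originally given $b_1,b_2$, which is where transitivity of $\sim$ (part of (C1)) is silently invoked, and being careful that $f$ being a left adjoint in $\RelMon$ really does reduce, via Proposition~\ref{prop:leftadjoint}, to just (L1) and (L2) together with the already-checked fact that $f$ is a morphism of relational monoids.
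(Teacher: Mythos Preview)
Your proposal is correct and follows exactly the paper's approach: show that the quotient map $f$ is a morphism of partial abelian monoids (hence a morphism in $\RelMon$), then invoke Proposition~\ref{prop:leftadjoint} by deriving (L1) from (C5) and (L2) from the extra hypothesis $x\sim 0\implies x=0$. Your write-up simply unpacks what the paper compresses into three lines (the paper cites \cite{ChePul:SILiPAM} for the morphism claim rather than verifying it directly); one small slip is that your paraphrase of (L1) asks only for $f(x_1+x_2)=[x]_\sim$ rather than the required $x_1+x_2=x$, but your actual (C5) argument correctly delivers the stronger conclusion.
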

\begin{proof}
By \cite{ChePul:SILiPAM}, $f$ is a morphism of partial abelian monoids, hence it is a morphism in $\RelMon$.
The condition (C5) implies (L1) and the additional condition implies (L2).
\end{proof}

Thus, we may say that some of the conditions from the paper \cite{ChePul:SILiPAM} come from
the 2-structure on $\RelMon$.

Finally, let us mention another definition, from the classical paper \cite{loomis1955lattice}.
\begin{definition}
Let $A$ be a complete orthomodular lattice. A {\em dimension equivalence} on $A$
is a equivalence relation on $A$ such that
\begin{enumerate}
\item[(A)]If $a\sim 0$, then $a=0$.
\item[(B)]If $a_1\perp a_2$ and $a_1\vee a_2\sim b$, then there exists an orthogonal
decomposition of $b$, $b=b_1\vee b_2$, such that $b_1\sim a_1$ and $b_2\sim a_2$.
\item[(C)]If $\{a_\alpha\}$ and $\{b_\alpha\}$ are pairwise orthogonal families of elements,
such that $a_\alpha\sim b_\alpha$ for all $\alpha$, then $\bigvee_\alpha a_\alpha=\bigvee_\alpha b_\alpha$.
\item[(D)]If $a$ and $b$ are not orthogonal in $A$ then there are nonzero $a_1,b_1$ in $A$ such that
$a\geq a_1$, $b\geq b_1$ and $a_1\sim b_1$.
\end{enumerate}
\end{definition}

An {\em orthomodular lattice} can be defined as an effect algebra that is
lattice-ordered and satisfies the condition $a\perp a\implies a=0$.
Note that (A) is (M3), (B) is (M2) and (C) is an infinitary version of (C2).
So a dimensional equivalence on an orthomodular lattice is a particular type of
monad in $\RelMon$ arising from an adjunction. It remains an open problem whether we can obtain the conditions
(C) and (D) using the 2-categorical machinery within $\RelMon$. Especially, the 
condition (D) remains a puzzle to us.

\noindent {\bf Acknowledgements}
{\em We are indebted to both anonymous referees for valuable comments and suggestions.}


\end{document}